\pdfoutput=1
\documentclass{amsart}
\usepackage{graphicx, color, overpic}
\usepackage[hidelinks]{hyperref}
\usepackage[nameinlink]{cleveref}

\definecolor{cerulean}{rgb}{0,.48,.65} 
\definecolor{magenta}{rgb}{.5,0,.5} 
\definecolor{dred}{rgb}{.5,0,0} 
\definecolor{green}{rgb}{0,.5,0} 
\definecolor{blue}{rgb}{0,0,1} 
\definecolor{black}{rgb}{0,0,0} 
\definecolor{dgreen}{rgb}{0,.3,0} 
\definecolor{vdred}{rgb}{.3,0,0} 
\definecolor{red}{rgb}{1,0,0} 
\definecolor{salmon}{rgb}{0.98,0.50,0.45} 
\definecolor{gray}{rgb}{.5,.5,.5} 
\definecolor{seagreen}{rgb}{0.13,0.70,0.67} 
\definecolor{chartreuse}{rgb}{0.40,0.80,0.00}
\definecolor{cornflower}{rgb}{0.39,0.58,0.93} 
\definecolor{gold}{rgb}{0.80,0.68,0.00}


\usepackage{nima2}

\DeclareMathOperator{\CAT}{CAT}

\title[Strong shortcuts]{Strong shortcuts, generating sets, \\ and isometric circles in asymptotic cones}

\author{Nima Hoda}
\address{Department of Mathematics, Cornell University \\
  Ithaca, NY 14853, USA} \thanks{The first named author was partially
  supported by an NSERC Postdoctoral Fellowship.}
\email{nima@nimahoda.net}

\author{Timothy Riley}
\address{Department of Mathematics, Cornell University \\
  Ithaca, NY 14853, USA}
\email{tim.riley@math.cornell.edu}

\date{\today}

\keywords{strongly shortcut group}
\subjclass[2020]{20F65, 
  20F67, 
  20F69} 

\begin{document}

 \begin{abstract}
  We show that whether loops can be shortcut in a group's Cayley graph depends on the choice of finite generating set.  Our example is the direct product of two rank-2 free groups and a consequence is that this group has asymptotic cones with isometrically embedded circles that are null-homotopic.
     \end{abstract}

\maketitle


\section{Introduction}

 A metric space is \emph{strongly shortcut} \cite{Hoda:shortcut_graphs:2022, Hoda:shortcut_space} when any sufficiently long loop can be subdivided into two   loops that are shorter by some constant multiplicative factor.  In more detail, suppose $\rho: [0,1] \to X$ with $\rho(0) = \rho(1)$ is a loop in a metric space $X$.  A path $\mu$ in $X$ from $\rho(p)$ to $\rho(q)$, where $0 \leq p \leq q \leq 1$, subdivides $\rho$ into two loops: the concatenation $\rho_1$ of the reverse of $\rho |_{[p,q]}$ with $\mu$, and  the concatenation $\rho_2$ of $\rho |_{[q,1] \cup [0,p]}$ with $\mu$.   We say $X$ is \emph{strongly shortcut} when there exists $L \geq 0$ and $\lambda <1$ such that every  loop $\rho: [0,1] \to X$ of length $\ell \geq L$ can be \emph{subdivided} into two loops $\rho_1$ and $\rho_2$ each of length at most $\lambda \ell$.  If $X$ is a graph, it is strongly shortcut if and only if there exists $L \geq 0$ and $\lambda <1$  such that every combinatorial loop in $X$ can be shortcut into two combinatorial loops in the above manner \cite[Theorem~B]{Hoda:shortcut_space}.  The strong shortcut property is the instance of Gromov's loop subdivision property \cite[\S5.F]{Gromov:asymptotic_invariants:1993} where the loop is required to be subdivided into precisely two sub-loops.   

A \emph{word} $w$ on an alphabet $A$ is a string $a^{\varepsilon_1}_1 \cdots a^{\varepsilon_n}_n$ where each $a_i \in A$ and each $\varepsilon_i \in \{ \pm 1\}$.  We write $|w| =n$.  A \emph{cyclic conjugate} of $w$ is a word $a^{\varepsilon_{i+1}}_{i+1} \cdots a^{\varepsilon_n}_n a^{\varepsilon_{1}}_{1} \cdots a^{\varepsilon_i}_i$ for some $i \in \{1, \ldots, n \}$.

A finitely generated group $G$ is \emph{strongly shortcut} when, for some choice of finite generating set $A$,  its Cayley graph $\textup{Cay}(G, A)$  is strongly shortcut;  or, equivalently, when $G$,  acts properly and cocompactly on a strongly shortcut graph \cite{Hoda:shortcut_graphs:2022}. For a Cayley graph $\textup{Cay}(G, A)$, the strong shortcut property amounts to: there exists $L>0$ and $\lambda <1$ such that if $w$ is a word on $A$ that represents the identity and  $|w| \geq L$, then $w$ has a cyclic conjugate expressible as a concatenation $w_1 w_2$ of words $w_1$ and $w_2$ such that $(1-\lambda)|w| \le |w_1| \leq |w_2|$  and for which there exists a word $\mu$ that equals   $w_1$ in $G$ and satisfies  $|\mu | \leq \lambda |w_1|$.

    The purpose of  this note is to prove that  the choice of finite generating set can matter, answering a question \cite[Section~9, Qu.~3]{chw:2022} of Cashen, Hoda, and Woodhouse.  We prove:

\begin{thm} \label{main}
The Cayley graph $\textup{Cay}(G, A)$  of the product $$G = F_2 \times F_2 = F(a,b) \times F(c,d)$$  of two rank-2 free groups is strongly shortcut when $A = \{a,b,c,d\}$ but not when $A = \{a,b,c,db^{-1}\}$.  
\end{thm}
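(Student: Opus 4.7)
The proof splits into verifying strong shortcut for $\{a,b,c,d\}$ and refuting it for $\{a,b,c,e\}$.

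For the first, I would identify $\textup{Cay}(G, \{a,b,c,d\})$ with the Cartesian product $T_1 \square T_2$ of the $4$-valent Cayley trees of $F(a,b)$ and $F(c,d)$, equipped with the $\ell^1$ product metric. Trees are $0$-hyperbolic, hence strongly shortcut by \cite{Hoda:shortcut_graphs:2022}, and the strong shortcut property is preserved under Cartesian products (if this isn't available from prior work, a short direct argument works by projecting the given loop to the two tree factors and shortcutting the longer projection). This handles the easy direction.

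For the second, more delicate direction, my plan is to exhibit a sequence of combinatorial loops $w_n$ in $\textup{Cay}(G, \{a,b,c,e\})$ of length $\Theta(n)$ with the property that the best shortcut factor $\lambda(w_n) \to 1$. The first step is to derive a closed form for word length in this generating set: since $d = eb$, every factor of $d$ in an element must be spelled using an $e$-generator and so costs an extra $b$-letter to cancel the stray $b^{-1}$ contribution to the $F(a,b)$-projection. Restricted to the $\mathbb{Z}^2$ subgroup $\langle b,d\rangle$, this gives $|b^k d^m|_{\{a,b,c,e\}} = |m| + |m+k|$, which is the standard $\ell^1$ metric after the coordinate change $(m,k) \mapsto (m, m+k)$; more generally one obtains a two-factor formula for $(u,v) \in F(a,b) \times F(c,d) = G$. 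A second input is the group identity $aeb = eba$ (equivalent to $[a,d] = 1$), which yields an isometrically embedded $6$-cycle $1, a, ae, aeb, eb, e$ in the Cayley graph at every basepoint.

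The main obstacle is actually constructing the family $w_n$ and verifying the almost-isometric bound $\max_{s,t}\bigl(\mathrm{arc}(s,t) - d_G(w_n(s), w_n(t))\bigr) = o(\ell_n)$. The Cayley graph $\textup{Cay}(G, \{a,b,c,e\})$ contains numerous isometrically embedded $\mathbb{Z}^2$-flats (the subgroups $\langle a,c\rangle, \langle b,c\rangle, \langle b,d\rangle, \langle a,d\rangle$) and free-group subgroups ($\langle a,b\rangle, \langle c,e\rangle$), any of which could a priori furnish a shortcut between distant points on $w_n$. The loops must therefore be chosen to stay generic with respect to all these subgroups simultaneously, and the bound is verified by a case analysis according to which subgroup provides the shortest path between each pair of points on $w_n$. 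Once this near-isometric bound is in place, a standard ultralimit argument produces an isometrically embedded circle in an asymptotic cone of $G$, which is the obstruction to strong shortcut in the second part of the theorem (and its null-homotopic asymptotic-cone corollary follows automatically).
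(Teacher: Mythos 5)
Your first half is essentially the paper's argument. The paper also reduces to the tree factors: it proves (by a centroid decomposition of the tree traced out by a word representing $1$ in $F(a,b)$) that any such word has a cyclic conjugate $u_1u_2$ with both $u_i$ trivial and of length at least $\lfloor |u|/3\rfloor$, and then shortcuts a loop in the product by deleting the letters of a suitable $u_i$ from the projection to the factor carrying at least half the letters. Your appeal to ``trees are hyperbolic, hence strongly shortcut, and the property passes to products'' is legitimate (and is in effect the cited Theorem~C on $\CAT(0)$ cube complexes), and your fallback projection argument is the paper's elementary proof in outline; the only care needed is in lifting the shortcut of the projected loop back to a subword of $w$ of length between $(1-\lambda)|w|$ and $|w|/2$, which the paper spells out. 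So this direction is fine.

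The second half has a genuine gap: the family of loops $w_n$, which is the entire content of the theorem, is never constructed, and the verification is deferred to ``a case analysis according to which subgroup provides the shortest path between each pair of points.'' That strategy cannot work as stated, because a geodesic in $\textup{Cay}(G,\{a,b,c,e\})$ between two points on $w_n$ need not lie in any of the flats or free subgroups you list --- it is an arbitrary word in all four generators, and you must bound its length from below against every way of interleaving $a$'s, $b$'s, $c$'s and $e$'s. The paper's solution is to take the explicit commutators $w_n = t^n c t^{-n} a t^n c^{-1} t^{-n} a^{-1}$ with $t = db^{-1}$ (your $e$), which are cycles of length $4n+4$, and to prove they are \emph{isometrically embedded} by showing every length-$(2n+2)$ subword of a cyclic conjugate is geodesic. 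The tool for the lower bound is not a subgroup case analysis but a corridor argument: in a minimal-area van~Kampen diagram for $u_k v^{-1}$ (with $v$ a competing geodesic), the $a$-corridor and $c$-corridor each cross from $u_k$ to $v$ exactly once and do not cross each other, which forces $v$ to decompose as $\alpha c \beta a \gamma$ with $\alpha,\beta,\gamma$ lying in the isometrically embedded $\Z^2 = \langle b,t\rangle$, whereupon a short $\ell^1$ computation gives $|v| \ge 2n+2$. Your observations about $d = eb$, the length formula on $\langle b,d\rangle$, and the hexagon coming from $[a,d]=1$ are correct and point in the right direction (the hexagon is essentially the $n=0$ ziggurat), but without the explicit $w_n$ and the corridor argument the negative direction --- and hence the theorem --- is not proved. (Also, your detour through asymptotic cones is unnecessary: unbounded isometrically embedded cycles, or even loops with shortcut factor tending to $1$, contradict the definition of strongly shortcut directly.)
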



In \cite{Hoda:shortcut_graphs:2022} Hoda exhibited a   group (namely, $\langle a,t \mid t^{-1}a t =a^2 \rangle$) such that for one finite generating set (namely, $\{ a, t \}$) there is an upper  bound on the lengths of isometrically embedded cycles in the Cayley graph (the \emph{shortcut property}), but for another finite generating set (namely, $\{ a, t, t^2 \}$) there is no such bound.  Our examples of Theorem~\ref{main}  strengthen this  in that for $A = \{a,b,c,d\}$, there exists $\lambda <1$ such that  every combinatorial loop  in  $\textup{Cay}(G, A)$ of length  $\ell > 4$ can be \emph{strongly} shortcut into a pair of loops, each of length at most $\lambda \ell$, but if $A = \{a,b,c,db^{-1}\}$ then there is no upper bound on the lengths of  isometrically embedded cycles.

For $A = \{a,b,c,d\}$, every asymptotic cone of $\textup{Cay}(G, A)$ is a product of  a pair of   $\R$-trees, this product having the $\ell_1$-metric, and so these cones contain no isometrically embedded circles---loops can be shortened within one factor. The asymptotic cones of a group defined with respect to different finite generating sets are all biLipschitz equivalent, so all asymptotic cones of $G = F_2 \times F_2$ are contractible.  Nevertheless, via \cite[Theorem~A]{Hoda:shortcut_space}, Theorem~\ref{main} implies that asymptotic cones of $G$ with $A = \{a,b,c,db^{-1}\}$ have isometrically embedded circles:   

\begin{cor} \label{cor}
$F_2 \times F_2$ has asymptotic cones with isometrically embedded circles that are homotopically
trivial.
\end{cor}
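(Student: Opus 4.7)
The plan is to combine Theorem~\ref{main} with \cite[Theorem~A]{Hoda:shortcut_space}, the characterization of strongly shortcut graphs that the excerpt already invokes. That result says that a graph admitting a proper, cocompact, isometric group action is strongly shortcut if and only if none of its asymptotic cones contains an isometrically embedded circle.

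First I would specialize to $A = \{a, b, c, db^{-1}\}$ and apply Theorem~\ref{main} to conclude that $\textup{Cay}(G, A)$ is not strongly shortcut. Applying the cited characterization to this Cayley graph then produces an asymptotic cone of $G$, taken with respect to $A$, that contains an isometrically embedded circle $C$. This gives the isometric embedding claim of the corollary.

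The remaining task is to show that $C$ is null-homotopic in its ambient cone. Here I would use the observation already recorded in the paragraph preceding the corollary: with the standard generating set $\{a,b,c,d\}$, each asymptotic cone of $\textup{Cay}(G, \{a,b,c,d\})$ is the $\ell_1$-product of two $\R$-trees, hence contractible. Because any two asymptotic cones of $G$ built from the \emph{same} ultrafilter, scaling sequence, and basepoint sequence but different finite generating sets are biLipschitz equivalent (the identity map on $G$ is a bijective bi-Lipschitz equivalence of the two word metrics), the asymptotic cone containing $C$ is likewise contractible, and so $C$ is null-homotopic there.

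The argument is essentially bookkeeping once Theorem~\ref{main} and \cite[Theorem~A]{Hoda:shortcut_space} are in hand, so I do not expect a significant obstacle. The one point worth stating carefully is the biLipschitz transfer: one must use the same ultrafilter, scaling sequence, and basepoint sequence on both sides so that contractibility passes directly from the cone built from $\{a,b,c,d\}$ to the cone containing $C$.
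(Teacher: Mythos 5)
Your proposal is correct and follows essentially the same route as the paper: apply Theorem~\ref{main} for $A = \{a,b,c,db^{-1}\}$ together with \cite[Theorem~A]{Hoda:shortcut_space} to obtain the isometrically embedded circle, then transfer contractibility via the biLipschitz equivalence with the cones of $\textup{Cay}(G,\{a,b,c,d\})$, which are $\ell_1$-products of $\R$-trees. Your explicit care about matching the ultrafilter, scaling, and basepoint sequences is a reasonable refinement of the paper's brief remark that all asymptotic cones of $G$ are biLipschitz equivalent and hence contractible.
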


By contrast, it follows from   \cite[Lemma 1.7]{Creutz:normed:2022}  and  \cite[Proposition~4.4]{Hoda:2023} that there are no isometrically embedded circles in the asymptotic cones of finitely generated abelian groups; there are also none in the asymptotic cones of hyperbolic groups because they are $\mathbb{R}$-trees.  Corollary~\ref{cor} shows that relaxing the abelian or hyperbolicity hypothesis in a most elementary manner can give groups with   asymptotic cones that have isometrically embedded circles, despite all being simply connected.  Cashen, Hoda, and Woodhouse \cite{chw:2022} were the first to establish the existence of groups with this property, namely snowflake groups of Brady and Bridson.  Corollary~\ref{cor} provides a more elementary example and answers in the affirmative a question they asked \cite[Section~9, Qu.~6]{chw:2022} as to whether a group with quadratic isoperimetric function can exhibit  this behaviour.

\section{Proof}

We have $G =  F(a,b) \times F(c,d)$.  
That $\textup{Cay}(G, \{a,b,c,d\})$ is strongly shortcut is an instance of the part of \cite[Theorem~C]{Hoda:shortcut_graphs:2022} which states that the $1$-skeleton of a finite dimensional $\CAT(0)$ cube complex has this property.  However, as this is an elementary example, it merits an elementary proof. The following lemma details how loops in $\textup{Cay}(F(a,b), \{a,b\})$ admit shortcuts, and afterwards we will explain how to promote this to the  product. (The words $u= a^ma^{-m} a^{-m} a^{m} b^m b^{-m}$ demonstrate the inequality the lemma presents to be sharp.) 

\begin{lem}\label{shortcuts in free groups}
	If a word $u$ represents $1$ in $F(a,b)$, then it has a cyclic conjugate that can be expressed as a concatenation $u_1 u_2$ of words $u_1$ and $u_2$ that both represent $1$ in $F(a,b)$ and $$\min \{|u_1|, |u_2| \} \geq \lfloor |u|/3 \rfloor.$$ 
\end{lem}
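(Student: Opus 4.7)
The plan is to view $u$ as tracing a closed walk $v_0, v_1, \ldots, v_n = v_0$ in the Cayley tree $T$ of $F(a,b)$, where $n = |u|$. Under this correspondence, writing a cyclic conjugate of $u$ as a concatenation $u_1 u_2$ of two words each representing $1$ amounts to choosing indices $0 \leq i \leq j \leq n$ with $v_i = v_j$; the factors then have lengths $j - i$ and $n - (j - i)$. The task becomes finding a pair of visits of the walk to a common vertex whose gap lies in $[\lfloor n/3 \rfloor, n - \lfloor n/3 \rfloor]$.

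To produce such a pair, I would examine the three roughly equally spaced times $t_1 = 0$, $t_2 = \lfloor n/3 \rfloor$, $t_3 = 2\lfloor n/3 \rfloor$ and the vertices $v^{(r)} := v_{t_r}$. If two of these coincide, say $v^{(r)} = v^{(s)}$ with $r < s$, then $t_s - t_r \in \{\lfloor n/3 \rfloor, 2\lfloor n/3 \rfloor\}$ already lies in $[\lfloor n/3 \rfloor, n - \lfloor n/3 \rfloor]$ (using $3\lfloor n/3 \rfloor \leq n$), and we are done. Otherwise $v^{(1)}, v^{(2)}, v^{(3)}$ are pairwise distinct, and the key step is to pass to their tree-median $m$ in $T$: any walk in $T$ from $v^{(r)}$ to $v^{(s)}$ must traverse the unique geodesic between them, and $m$ lies on each pairwise geodesic, so the walk visits $m$ during each of the three sub-intervals $[t_1, t_2]$, $[t_2, t_3]$, $[t_3, n]$, giving visits $s_1 \leq s_2 \leq s_3$ of $m$, one per sub-interval.

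To finish, I would analyse the three cyclic gaps $s_2 - s_1$, $s_3 - s_2$, and $n - s_3 + s_1$. They sum to $n$, and since each spans at most two adjacent sub-intervals, each is bounded above by $n - \lfloor n/3 \rfloor$ (using $3\lfloor n/3 \rfloor \leq n$ again). Hence at least one gap is $\geq \lceil n/3 \rceil \geq \lfloor n/3 \rfloor$ and simultaneously $\leq n - \lfloor n/3 \rfloor$, producing the required split at the corresponding pair of visits of $m$. The main bookkeeping I anticipate is the degenerate subcase where two of the $s_i$ coincide, forcing $m$ to equal one of the $v^{(r)}$; in every such subcase $m$ still admits two distinct visits whose gap falls in the desired interval, and this is readily checked directly from the range constraints on $s_1$ and $s_3$.
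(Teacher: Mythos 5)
Your argument is correct, and it takes a genuinely different route from the paper's. The paper reads $u$ around the finite labelled tree that it traces, applies a centroid decomposition to cut a cyclic conjugate of $u$ into null subwords each of length at most $(|u|/2)+1$, coalesces these down to exactly three pieces, and then pigeonholes on their lengths. You instead work directly with the closed walk of length $n=|u|$ in the Cayley tree of $F(a,b)$: you sample it at three roughly equally spaced times, pass to the tree-median $m$ of the three sampled vertices, use the fact that a walk in a tree must visit every vertex separating its endpoints to obtain a visit to $m$ in each of the three arcs, and then pigeonhole on the three cyclic gaps between these visits. The estimates check out: each gap is at most $n-\lfloor n/3\rfloor$ because consecutive visits lie in adjacent thirds of the loop, the gaps sum to $n$, so some gap lies in $[\lfloor n/3\rfloor,\, n-\lfloor n/3\rfloor]$, which is exactly the required split; the coincidence and degenerate subcases you flag are harmless for the reasons you give. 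What your approach buys is the elimination of both the centroid decomposition and the iterative coalescing step --- the median of three points does the work of the centroid, and the separation property of trees replaces the bookkeeping about reassembling the subtrees $\widehat{T}_1,\ldots,\widehat{T}_k$; the interval membership delivers the lower bound on both factors at once. What the paper's version buys is that it stays entirely within the combinatorics of the finite tree traced by $u$ (no ambient Cayley tree needed), and its intermediate output --- $k$ null pieces each of length at most $(|u|/2)+1$ --- is slightly more refined information than a single good split. Both arguments yield the same constant $\lfloor |u|/3\rfloor$.
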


\begin{proof}
 Given a finite tree $T$ with $N$ vertices, there is a vertex $\ast$ so that removing $\ast$ and the incident edges  subdivides $T$ into subtrees $T_1, \ldots, T_k$ each with at most $N/2$ vertices.  This is known as a \emph{centroid decomposition}.   So removing $\ast$ from $T$ and taking the closures $\widehat{T}_1, \ldots \widehat{T}_k$ of the resulting connected components gives subtrees so that $\widehat{T}_i$ is $T_i$ with one additional edge, $\widehat{T}_i$ has at most $(N/2)+1$ vertices, and there are vertices $\ast_i$ in  $\widehat{T}_i$ so that identifying $\ast_1, \ldots, \ast_k$ reconstitutes $T$.

Given a word $u$ that represents $1$ in $F(a,b)$, there is a finite planar tree $T$ with each edge directed and labelled by $a$ or $b$ so that on reads $u$ around $T$ from some base vertex.  The number of edges in $T$ is $|u|/2$, and so the  number of vertices is $N=(|u|/2)+1$.  By centroid decomposition,  there exists a cyclic conjugate  $w_1 \cdots w_k$ of $u$ so that for all $i$,  $w_i$  represents $1$ in $F(a,b)$  and $(|w_i|/2) +1 \leq (N/2) +1 = (((|u|/2)+1)/2) +1$.  (The point is that $w_i$ is read around  $\widehat{T}_i$, which has $(|w_i|/2) +1$ vertices.)  That is,  $|w_i| \leq     (|u|/2) +1$.

If $k =2$, then let $u_1 = w_1$ and $u_2 = w_2$, and the result is straight-forward.   

Assuming then that $k \geq 3$, we have either $|w_1 w_2| \leq |u|/2$ or $|w_3  \cdots w_k| \leq |u|/2$.   In the former case coalesce $w_1 w_2$ and repeat.  In the latter case redefine $w_3$ as  $w_3 \cdots w_k$.    The result is   a cyclic conjugate  $w_1  w_2 w_3$ of $u$ such that $|w_i| \leq     (|u|/2) +1$ and $w_i=1$ in $F(a,b)$ for $i =1, 2, 3$.          

Take $u_1$ be the longest of $w_1$, $w_2$, and $w_3$ and $u_2$ to be the concatenation of the other two in the appropriate order so that $u_1u_2$ is a cyclic conjugate of $u$.  Then $u_1 =u_2=1$ in $F(a,b)$.  Further,   $|u_1| \geq |u|/3$ because it is the maximum of three natural numbers that sum to $|u|$.  And   $|u_2| = |u| - |u_1| \geq   |u| - ((|u|/2) +1) = (|u|/2) -1 \geq \lfloor |u|/3 \rfloor$.  (The case $|u|=3$, where the final inequality fails, does not arise because $u=1$ in $F(a,b)$ implies that $|u|$ is even.)        
\end{proof}

Suppose $w$ is a word on $a,b,c,d$ which represents $1$ in $G$. Without loss of generality, we may assume that the total  number of $a^{\pm 1}$ and $b^{\pm 1}$ letters in $w$ is at least the number of   $c^{\pm 1}$ and $d^{\pm 1}$ letters.  So the word $u$  obtained from $w$ by deleting all $c^{\pm 1}$ and $d^{\pm 1}$ letters satisfies $|u| \geq |w|/2$.   Now $u$ represents $1$ in $F(a,b)$ and, per Lemma~\ref{shortcuts in free groups} applied to $u$, we have  $|u_1|, |u_2| \geq \lfloor |w|/6 \rfloor$.  So for some $i \in \{1,2\}$, some cyclic conjugate of $w$ has a subword  $w_1$ of length at most $|w|/2$ that contains all the letters of $u_i$.  Deleting the letters of this $u_i$ from $w_1$ gives a word that equals $w_1$ in $G$ and shortcuts $w$ so as to establish the strong shortcut property for $\textup{Cay}(G, \{a,b,c,d\})$.

\medskip

We now turn to showing that for $t = db^{-1}$, the Cayley graph $\Gamma = \textup{Cay}(G, \{a,b,c,t\})$ is not
strongly shortcut. With this generating set,  $G$ has the presentation
\begin{equation}\label{pres}
  G = \langle a,b,c,t \mid [a,c],\; [a, tb],\; [b,c],\;
  [b,t]\rangle.
\end{equation}
For all $n \in \N$, the length-$(4n +4)$ word
\begin{equation}\label{cycle}
  w_n = [t^nct^{-n},a] = t^nct^{-n}a t^n c^{-1}t^{-n}a^{-1},
\end{equation}
represents  $1$ in $G$, as can be seen  per the van~Kampen diagram in \Cref{ziggurat} (illustrating the case $n=5$), or by observing that 
$t^n c t^{-n} = (d b^{-1})^n c(d b^{-1})^{-n} =
d^n c d^{-n}$, which commutes with $a$ in $G$.   

We will prove that for all $n \geq 0$, the word $w_n$ defines an isometrically embedded cycle in $\Gamma$, and so cannot be shortcut. The ``ziggurat'' nature of the van~Kampen diagram in \Cref{ziggurat} gives it the property that every edge-path connecting a pair of antipodal vertices on the diagram's perimeter has length (meaning the number of edges comprising the edge-path) greater than or equal to the length of either edge-path around the perimeter connecting them. This is a necessary condition for $w_n$  to define an isometrically embedded cycle in $\Gamma$ and is what led us to Theorem~\ref{main}.   

To prove $w_n$ defines an isometrically embedded cycle in $\Gamma$ we will show that all words of length $2n+2$ that are subwords of cyclic conjugates of $w_n$ are geodesic.  The words in question are for $k \in \{0,1,2,\ldots,n\}$,
$$\begin{aligned} 
u_k & = t^{n-k}c t^{-n}a t^k,  & \qquad  u'_k & = t^{-(n-k)}a t^{n}c^{-1}t^{-k}, \\  
u''_k & =  t^{n-k}c^{-1} t^{-n}a^{-1} t^k,  & \qquad  u'''_k & =      t^{-(n-k)}a^{-1} t^{n}ct^{-k}. 
\end{aligned}$$

\begin{figure}[ht]
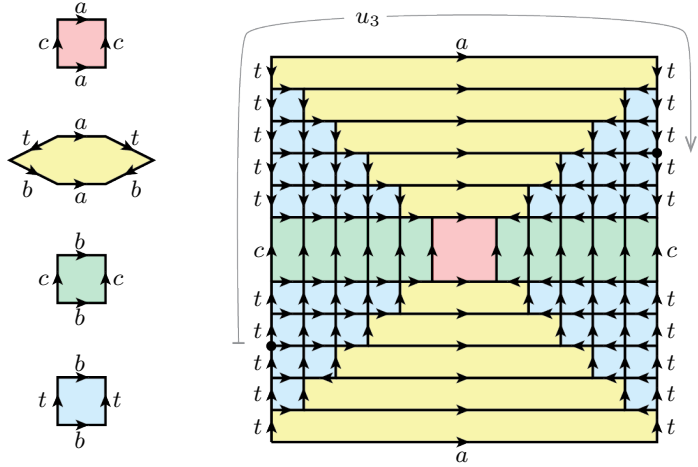

\begin{overpic} 
{relations}
\put(25, 96){\small{$a$}}
\put(25, 80){\small{$a$}}
\put(17, 88){\small{$c$}}
\put(33.5, 88){\small{$c$}}
\put(25, 70.5){\small{$a$}}
\put(25, 54){\small{$a$}}
\put(13.5, 67){\small{$t$}}
\put(37.5, 67){\small{$t$}}
\put(13.5, 55.5){\small{$b$}}
\put(37.5, 55.5){\small{$b$}}
\put(25, 44){\small{$b$}}
\put(25, 27){\small{$b$}}
\put(17, 36){\small{$c$}}
\put(33.5, 36){\small{$c$}}
\put(25, 17){\small{$b$}}
\put(25, 0){\small{$b$}}
\put(17, 9){\small{$t$}}
\put(33.5, 9){\small{$t$}}
\end{overpic}
\begin{overpic} 
{ziggurat}
\put(26.5, 91.5){\small{$u_3$}}
\put(4.5, 3){\small{$t$}}
\put(4.5, 10){\small{$t$}}
\put(4.5, 16.8){\small{$t$}}
\put(4.5, 23.7){\small{$t$}}
\put(4.5, 30.5){\small{$t$}}
\put(4.5, 52){\small{$t$}}
\put(4.5, 59){\small{$t$}}
\put(4.5, 65.9){\small{$t$}}
\put(4.5, 72.8){\small{$t$}}
\put(4.5, 79.7){\small{$t$}}
\put(93.5, 3){\small{$t$}}
\put(93.5, 10){\small{$t$}}
\put(93.5, 16.8){\small{$t$}}
\put(93.5, 23.7){\small{$t$}}
\put(93.5, 30.5){\small{$t$}}
\put(93.5, 52){\small{$t$}}
\put(93.5, 59){\small{$t$}}
\put(93.5, 65.9){\small{$t$}}
\put(93.5, 72.8){\small{$t$}}
\put(93.5, 79.7){\small{$t$}}
\put(4.5, 41){\small{$c$}}
\put(93.5, 41){\small{$c$}}
\put(48, -3){\small{$a$}}
\put(48, 86){\small{$a$}}
\end{overpic}
\caption{Left: the four defining relators of the presentation \eqref{pres} for $G$. Right: a ``ziggurat'' van~Kampen diagram for the word
    $w_5 =   t^5 c t^{-5}a t^5 c^{-1}t^{-5}a^{-1}$.}
\label{ziggurat}
\end{figure}

\begin{claim}
  It suffices to show that $u_k$ is a geodesic word for all $k \in \{0,1,2,\ldots,n\}$.
\end{claim}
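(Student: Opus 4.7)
The plan is to exploit symmetries of $\Gamma$ to reduce the four families $u_k, u'_k, u''_k, u'''_k$ to the single family $\{u_k : k \in \{0, \ldots, n\}\}$. Two geodesicity-preserving operations are immediately available: taking the word-inverse (which preserves length and sends geodesics to geodesics), and applying any automorphism of $G$ that preserves $\{a, b, c, t\}^{\pm 1}$ setwise (which induces a graph isometry of $\Gamma$, hence sends geodesic words to geodesic words).

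Because $t = db^{-1}$ couples the two free factors of $G = F(a,b) \times F(c,d)$, a factor-wise automorphism that preserves the generating set must fix both $b$ and $d$, and therefore fix $t$. This still leaves the two involutions $\phi_a : a \mapsto a^{-1}$ (fixing $b, c, d$) and $\phi_c : c \mapsto c^{-1}$ (fixing $a, b, d$), each of which is an automorphism of $G$ preserving $\{a, b, c, t\}^{\pm 1}$. I would then verify by direct substitution the three identities
\[
(\phi_a \circ \phi_c)(u_k) = u''_k, \qquad (\phi_a(u_k))^{-1} = u'_{n-k}, \qquad (\phi_c(u_k))^{-1} = u'''_{n-k}.
\]
Since $k \mapsto n-k$ is a bijection of $\{0, 1, \ldots, n\}$, these identities show that geodesicity of $u_k$ for every $k$ forces geodesicity of $u'_k$, $u''_k$, and $u'''_k$ for every $k$, which is exactly the claim.

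The verification of the three identities is a routine calculation once the automorphisms are in hand, so the only conceptual step is the previous one: recognizing which factor-wise automorphisms respect the generating set $\{a, b, c, t\}$, namely those inverting an arbitrary subset of $\{a, c\}$ while fixing $b$ and $d$ (and hence $t$).
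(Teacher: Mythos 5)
Your proposal is correct and matches the paper's own argument: the paper uses exactly the two automorphisms $a\mapsto a^{-1}$ and $c\mapsto c^{-1}$ (each fixing the other generators and hence $t$) together with word inversion, and the same three identities $u''_k=\phi(\psi(u_k))$, $u'_{n-k}=\phi(u_k^{-1})$, $u'''_{n-k}=\psi(u_k^{-1})$. The extra remarks on why these are the only generating-set-preserving factor-wise automorphisms are fine but not needed.
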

\begin{proof}
Let $\phi$ be the automorphism of $G$ that sends $a$ to $a^{-1}$ and fixes $b$, $c$ and $t$. 
Let $\psi$ be the automorphism of $G$ that sends $c$ to $c^{-1}$ and fixes $a$, $b$ and $t$. 
Both induce isomorphisms of $\Gamma$.  Now, $u'_{n-k} = \phi(u_k^{-1})$, $u''_{k} = \phi(\psi(u_k))$, and    $u'''_{n-k} = \psi(u_k^{-1})$, and so if  $u_k$ is a geodesic word, then so are $u'_{n-k}$, $u''_{k}$, and $u'''_{n-k}$.
\end{proof}

Let $D \subset \R^2$ be a van~Kampen diagram with respect to the presentation in
\labelcref{pres}.  Take the union of all open edges labeled $a$ and
open faces having $a$-labeled edges on their boundary.  An
\emph{$a$-corridor} of $D$ is a connected component of this union.
We similarly define \emph{$c$-corridors}.  For example,  there is an $a$-corridor in   \Cref{ziggurat} running vertically through the diagram and a $c$-corridor running horizontally.

Each defining relator in the
presentation in \labelcref{pres} either has no appearance of $a$ or
has exactly two appearances: one of each sign.  Moreover, each relator
in which $a$ appears is a commutator of the form $[a,y]$ where
$y = tb $ or $y = c$.  So any $a$-corridor consists
of a chain of edges and faces.  Moreover, the chain is either linear
in which case the corridor is an open disk with its boundary word
having the form $[a,y_1\cdots y_m]$ or the chain is cyclic in
which case the corridor is an open annulus with its two boundary words
equal and having the form $y_1\cdots y_m$, where each $y_i$ is
either $(tb)^{\pm 1}$ or $c^{\pm 1}$.  The same statements hold
for $c$-corridors with each $y_i$ instead being equal to either
$a^{\pm 1}$ or $b^{\pm 1}$.   For example,   the $a$- and $c$-corridors in \Cref{ziggurat} have boundary words  $[a, (tb)^5 c (tb)^{-5}]$ and $[c, b^5ab^{-5}]$, respectively.

\begin{lem}\label{b_subgroup}
The subgroup $\langle b, t \rangle \leq G$ is an isometrically embedded copy of  $\Z^2 = \langle \, b, t \mid [b,t] \, \rangle$. 
%
\end{lem}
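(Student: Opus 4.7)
The plan is to establish the isomorphism and isometric embedding claims simultaneously by constructing a retraction-like homomorphism $\phi \colon G \to \Z^2$ that, on the subgroup $\langle b,t\rangle$, is inverse to the obvious map $\Z^2 \to \langle b,t\rangle$.

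First, I would verify that $b$ and $t$ commute in $G$: since $b \in F(a,b)$ and $d \in F(c,d)$ commute in the direct product, $bt = b\cdot d b^{-1} = d = db^{-1}\cdot b = tb$. (Equivalently, $[b,t]$ appears as one of the defining relators in the presentation \eqref{pres}.) This gives a surjection $\Z^2 \to \langle b, t\rangle$ sending the standard generators to $b$ and $t$.

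Next I would define $\phi \colon G \to \Z^2$ on generators by $a,c \mapsto (0,0)$, $b \mapsto (1,0)$, $t \mapsto (0,1)$, and verify that each of the four defining relators of \eqref{pres} is killed: $[a,c]$, $[b,c]$, $[b,t]$ map to commutators in the abelian group $\Z^2$ (hence to zero), and $[a,tb]$ maps to $[(0,0),(1,1)] = (0,0)$. Thus $\phi$ is well-defined. Since $\phi(b^p t^q) = (p,q)$, the map $\Z^2 \to \langle b,t \rangle$ is also injective, proving $\langle b,t\rangle \cong \Z^2$.

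For the isometric embedding, fix $p,q \in \Z$ and any word $w$ on $\{a,b,c,t\}^{\pm 1}$ representing $b^p t^q$ in $G$. Let $n_a, n_b, n_c, n_t$ denote the number of $a^{\pm 1}$, $b^{\pm 1}$, $c^{\pm 1}$, $t^{\pm 1}$ letters in $w$ respectively. Applying $\phi$ letter-by-letter, the first coordinate of $\phi(w)$ is a sum of $n_b$ terms from $\{\pm 1\}$, and the second is a sum of $n_t$ such terms; since $\phi(w) = (p,q)$, this forces $n_b \geq |p|$ and $n_t \geq |q|$. Therefore $|w| \geq n_b + n_t \geq |p|+|q|$. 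The converse inequality is given by the obvious word $b^p t^q$, so the word length of $b^p t^q$ in $\Gamma$ equals $|p|+|q|$, which is precisely its word length in $\Z^2 = \langle b,t \mid [b,t]\rangle$.

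There is no real obstacle here beyond checking the four relations; the key idea is spotting that assigning zero to $a$ and $c$ together with linearly independent values to $b$ and $t$ is consistent with the presentation, which then makes $\phi$ a simultaneous length-counting device for $b$- and $t$-letters.
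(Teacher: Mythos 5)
Your proof is correct and takes essentially the same approach as the paper, which simply observes that killing $a$ and $c$ retracts $G$ onto $\langle b, t \mid [b,t]\rangle$; your map $\phi$ is exactly that retraction, with the relator checks and the letter-counting argument for the isometric embedding written out in full.
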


 \begin{proof}
Killing $a$ and $c$ retracts $G$ onto  	$\langle \, b, t \mid [b,t] \, \rangle$.
 \end{proof}

\begin{lem} \label{corridor_backtracks} \label{nonogon} \label{bigon}
  Suppose $D$ is a minimal area van~Kampen diagram for the presentation in
  \labelcref{pres}.  
  \begin{enumerate}
  \renewcommand{\theenumi}{(\alph{enumi})}
\renewcommand{\labelenumi}{\theenumi}
\item  \label{lem1}  If $x$ is the boundary word of an $a$- or $c$-corridor of
  $D$, then $x$ is cyclically reduced or $|x| = 2$. 
\item  \label{lem2} There are  no annular $a$- or $c$-corridors in  $D$.
\item  \label{lem3} The intersection of an $a$-corridor and an
  $c$-corridor in $D$ is either empty or equal to an open face with
  boundary word $[a,c]$.
\end{enumerate}
\end{lem}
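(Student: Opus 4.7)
The plan is to leverage the minimality of $D$ together with the fact that $tb = d$ in $G$, so that $\langle tb, c\rangle = F(c,d)$ sits inside $G = F(a,b) \times F(c,d)$ as one of the two commuting direct factors; any word read along a side of an $a$-corridor then lies in $F(c,d)$, and the analogous word from a $c$-corridor lies in $F(a,b)$.

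For (a), suppose the boundary word $x$ of an $a$-corridor is not cyclically reduced with $|x|>2$. Writing $x=[a,y_1\cdots y_m]$ (linear case) or $x=y_1\cdots y_m$ (annular case), each $y_i\in\{(tb)^{\pm1},c^{\pm1}\}$ begins and ends with a letter from $\{b^{\pm1},t^{\pm1}\}$, so the letter $a$ in $x$ never cancels with any letter of any $y_i$. Hence the failure of cyclic reducedness must come from some adjacent pair with $y_{i+1}=y_i^{-1}$. The corresponding faces $G_i,G_{i+1}$ are then a reducible pair across their shared $a$-edge: their boundary labels read from this edge are mutual inverses, so folding them away produces a diagram of strictly smaller area with the same boundary, contradicting minimality. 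The $c$-corridor case is symmetric; the exceptional case $|x|=2$ corresponds to a lone $a$- or $c$-edge on $\partial D$ carrying no incident face.

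For (b), suppose $D$ contains an annular $a$-corridor with boundary word $x=y_1\cdots y_m$ ($m\geq 1$), cyclically reduced by (a). Its inner boundary circle bounds a sub-disk of $D$, which is a van~Kampen subdiagram for $x$, so $x$ represents $1$ in $G$. Every $y_i\in\{(tb)^{\pm1},c^{\pm1}\}\subseteq F(c,d)$, so $x$ represents $1$ in the embedded direct factor $F(c,d)\leq G$. But a cyclically reduced non-empty word in a free group is non-trivial, a contradiction. The annular $c$-corridor case is symmetric, via $F(a,b)$.

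For (c), suppose two distinct faces lie in $A\cap C$; each must be an $[a,c]$-face since $[a,c]$ is the only defining relator containing both $a$ and $c$. Choose $F=G_{i_1}$ and $F'=G_{i_2}$ in $A\cap C$ minimizing $i_2-i_1>0$, and write $F=H_{j_1}$, $F'=H_{j_2}$ along $C$. By this minimality, the sub-chain $D_1\subseteq A$ from $F$ to $F'$ and the sub-chain $D_2\subseteq C$ from $F$ to $F'$ meet exactly in $\{F,F'\}$, so $U=D_1\cup D_2\subseteq D$ is a topological annulus enclosing a sub-disk $R\subseteq D$. A local analysis at each vertex on the inner boundary of $U$ identifies the label of $\partial R$ as a subword of $y_1\cdots y_m$ (drawn from $\{(tb)^{\pm1},c^{\pm1}\}$) concatenated with the inverse of the corresponding subword of the boundary word of $C$ (drawn from $\{a^{\pm1},b^{\pm1}\}$). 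Since this label equals $1$ in $G=F(a,b)\times F(c,d)$ and its two parts lie in the two direct factors, each must individually be $1$ in its own free factor; reducedness from (a) then forces each to be empty, whence $F=F'$, contradicting our choice. The main technical work is the planar bookkeeping needed to pin down $\partial R$.
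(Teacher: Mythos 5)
Your overall strategy---corridor combinatorics plus minimality of $D$---matches the paper's, and part (a) follows essentially the same route (the paper simply notes that a cancellable pair of letters in $x$ corresponds to a cancellable pair of faces). Your part (b) is genuinely different: the paper deletes the annular corridor and glues its two (equal) boundary circles together to produce a smaller diagram, with no appeal to (a) or to the group structure, whereas you read the inner boundary word as a nonempty cyclically reduced word of $F(c,d)=\langle tb,c\rangle$ that bounds a subdiagram and hence must be trivial. This is a nice use of the direct-factor structure, but it inherits a small hole from the escape clause in (a): an annular $a$-corridor consisting of two $[a,c]$-faces glued along both their $a$-edges has boundary word $cc^{-1}$, which has length $2$, is not cyclically reduced, and genuinely represents $1$ in $F(c,d)$, so your contradiction evaporates in exactly that case. (The configuration is still impossible---it is a cancellable pair of faces---but you must say so; the paper's surgery argument needs no such case analysis.) In (c) your direct-factor splitting is in one respect more robust than the paper's, which takes an innermost region $R$ so that its boundary word has no $a$- or $c$-letters and lands in $\langle b,t\rangle\cong\Z^2$: your version tolerates stray $c^{\pm1}$ letters on the $a$-corridor side of $\partial R$ (coming from crossings with \emph{other} $c$-corridors, which your minimization of $i_2-i_1$ does not exclude), since they stay inside $F(c,d)$.

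The one genuine error is the endgame of (c). When both connecting subwords are forced to be empty, the conclusion is not $F=F'$. The $a$-edge of $F$ attaching it to the next face of $D_1$ and its $c$-edge attaching it to the next face of $D_2$ are interior to the annulus $U$ and contribute nothing to $\partial R$, so emptiness of the two subwords says only that $i_2=i_1+1$ and $j_2=j_1\pm1$: the faces $F$ and $F'$ are distinct but adjacent in both corridors, sharing an $a$-edge and a $c$-edge that meet at the single vertex to which $R$ degenerates. They then form a cancellable pair of $[a,c]$-faces, and the contradiction is with minimality of $D$, not with your choice of distinct faces. This is exactly how the paper closes the argument (``the region $R$ consists of a single vertex of $D$, contained in a cancellable pair of faces \dots\ This contradicts minimality of $D$''). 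The fix is one line, but as written the implication ``both subwords empty, whence $F=F'$'' is false.
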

\begin{proof}

\ref{lem1}  A cancellable pair of letters in $x$ with $|x| > 2$ would correspond
  to a cancellable pair of faces in the corridor, contradicting
  minimality of $D$.
  
\ref{lem2}  The two boundary words of an annular $a$- or $c$-corridor of $D$ would be
  equal.  Thus we could obtain a new disk diagram of lesser area by
  deleting the corridor and glueing together these two boundary
  components along a label and orientation preserving isomorphism.

\ref{lem3} Since the only defining relator in \labelcref{pres} containing both $a$ and
  $c$ letters is $[a,c]$, the intersection of an $a$-corridor
  with an $c$-corridor is a disjoint union of open faces each of
  which has boundary word $[a,c]$.  If there is more than one open
  face in this intersection then there is a subdiagram $R$ of $D$ that is
  bounded by the union of the two corridors.  The boundary word of an
  innermost such $R$ has no $a$- or $c$-letters and so, by
  \ref{lem2}, has the form $(tb)^{\ell}b^m$.  But
  this word represents the identity in $G$, so by \cref{b_subgroup}, we have
  $\ell = m = 0$.  That is, the region $R$ consists of a single vertex
  of $D$, contained in a cancellable pair of faces each having boundary
  word $[a,c]$.  This contradicts minimality of $D$.
\end{proof}

\begin{claim}\label{ai_instances}
  If $v$ is a geodesic word representing the same element as $u_k$ in
  $G$, then $a$ and $c$ each appear  exactly
  once in $v$ and $a^{-1}$ and $c^{-1}$ do not appear in $v$.
\end{claim}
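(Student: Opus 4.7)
The plan is to fix a minimal-area van~Kampen diagram $D$ for $u_k v^{-1}$, use the structural results on $a$- and $c$-corridors to count them via boundary edges, and then run a shortening argument that contradicts geodesicity of $v$ whenever it carries extra $a^{\pm 1}$ or $c^{\pm 1}$ letters.

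First, projecting $v$ onto the two free factors ($t \mapsto b^{-1}$ into $F(a,b)$, and $t \mapsto d$ into $F(c,d)$) immediately yields $|v|_a - |v|_{a^{-1}} = 1$ and $|v|_c - |v|_{c^{-1}} = 1$. By \Cref{corridor_backtracks}~\ref{lem2}, every $a$-corridor of $D$ is a disk and so contributes exactly one $a$-edge and one $a^{-1}$-edge to $\partial D = u_k v^{-1}$. Counting around $\partial D$ (the single positive $a$ comes from $u_k$, and the remaining $a$-edges lie on $v^{-1}$) therefore shows that $D$ has exactly $|v|_a$ many $a$-corridors, at most one of which is incident to the $a$-edge of $u_k$.

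The heart of the argument is a shortening step. Suppose for contradiction $|v|_a \geq 2$. Then some $a$-corridor $\alpha$ has both of its $a$-edges on the $v^{-1}$ arc of $\partial D$. Its boundary word has the form $[a,y]$ where $y$ is a word in $\{(tb)^{\pm 1}, c^{\pm 1}\}$, so $y$ commutes with $a$ in $G$. The two $a$-edges split $\partial D$ into two arcs, and the arc avoiding $u_k$ lies entirely in $v^{-1}$ and reads off a subword $\sigma$ there. Cutting $D$ along $\alpha$ isolates a sub-diagram whose boundary word is $\sigma \cdot y^{\pm 1}$, forcing $\sigma$ to equal $y^{\mp 1}$ in $G$ and hence also commute with $a$. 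Inverting, $v$ itself contains a subword of the form $a^{\epsilon}\,\sigma^{-1}\,a^{-\epsilon}$ (the two $a$-letters carry opposite signs on $\partial D$), so replacing it by $\sigma^{-1}$ yields a word representing the same element of $G$ but two letters shorter, contradicting $v$ geodesic. Hence $|v|_a = 1$ and $|v|_{a^{-1}} = 0$. The identical argument applied to $c$-corridors, whose $y$-sides lie in $\{a^{\pm 1}, b^{\pm 1}\}$ and therefore commute with $c$, gives $|v|_c = 1$ and $|v|_{c^{-1}} = 0$.

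The main technical point that needs care is verifying that $\alpha$ separates $D$ cleanly, so that cutting along it really does yield a sub-diagram with boundary word exactly $\sigma \cdot y^{\pm 1}$. I expect this to follow from the minimality of $D$ together with the remainder of \Cref{corridor_backtracks}, in particular the absence of annular corridors and the cyclical reducedness of corridor boundaries.
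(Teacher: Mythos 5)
Your proposal is correct and follows essentially the same route as the paper: a minimal-area van~Kampen diagram for $u_kv^{-1}$, the observation that any $a$- or $c$-corridor beyond the one emanating from $u_k$ must have both ends on $v$, and a shortening of $v$ by two letters that contradicts geodesicity. The only cosmetic differences are your extra exponent-sum bookkeeping (which the paper gets for free from the corridor count) and your algebraic substitution of $a^{\epsilon}\sigma^{-1}a^{-\epsilon}$ by $\sigma^{-1}$, where the paper instead deletes the corridor and glues its $y$ and $y^{-1}$ sides.
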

\begin{proof}
  Let $D$ be a minimal area van~Kampen diagram for the  word
  $u_kv^{-1}$, which represents $1$ in $G$.  Since $a^{-1}$ does not appear in $u_k$, the
  $a$-corridor of $D$ starting at the
  $a$ in $u_k$ must terminate at some $a$ in $v$.
  Thus $a$ appears at least once in $v$.  Any other occurrence of
  $a^{\pm 1}$ in $v$ would correspond to a distinct $a$-corridor.
  Since $u_k$ has no other occurrence of $a^{\pm 1}$, this corridor
  must both start and terminate in $v$.  Since the corridor has boundary word of
  the form $[a,y]$ we can obtain a new disk diagram by deleting the
  corridor and gluing the $y$ and $y^{-1}$ parts of the boundary
  together.  The new disk diagram has boundary word $u_k(v')^{-1}$ for
  the word $v'$ obtained from $v$ by deleting the
  $a^{\pm 1}$ and $a^{\mp 1}$ of the deleted
  corridor, contrary to geodicity of $v$.
  
  The proof for $c$ is the same.  
\end{proof}

We can now prove that $u_k$  is a geodesic word for all $k \in \{0,1,2,\ldots,n\}$. 
Suppose $v$ is a geodesic word representing $u_k$ in $G$.  We will first
consider the case where $c$ occurs before $a$ in $v$.  Let $D$ be
a minimal area van~Kampen diagram for $u_kv^{-1}$.  By \Cref{nonogon} and \Cref{ai_instances}, there is exactly one corridor of each type ($a$
or $c$) in $D$. These two corridors do not cross: were they to cross, they would do so at least twice, but by  \Cref{bigon}\ref{lem3} that does not happen.
Thus, by \Cref{corridor_backtracks}\ref{lem1}, the   $a$-corridor has
boundary word $[a,(tb)^{m}]$, for some $m$, and the
$c$-corridor has boundary word $[c,b^{\ell}]$, for some $\ell$.  Without
loss of generality, a path along the boundary of the $a$-corridor
spelling out $(tb)^{m}$ starts on $u_k$ in $\partial D$ and ends
on $v$ and the same holds for a path along the boundary of the
$c$-corridor spelling out $b^{\ell}$.  See \Cref{two_corridors}.

\begin{figure}[ht]
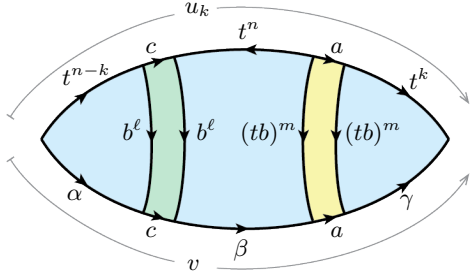

\begin{overpic} 
{bigon}
\put(38.5, 55.5){\small{$u_k$}}
\put(39, 0){\small{$v$}}
\put(70, 7){\small{$a$}}
\put(70, 47){\small{$a$}}
\put(30, 7){\small{$c$}}
\put(30, 47){\small{$c$}}
\put(12, 40){\small{$t^{n-k}$}}
\put(25, 28){\small{$b^{\ell}$}}
\put(41, 28){\small{$b^{\ell}$}}
\put(50, 50){\small{$t^{n}$}}
\put(50.5, 28){\small{$(tb)^{m}$}}
\put(73, 28){\small{$(tb)^{m}$}}
\put(87, 39){\small{$t^k$}}
\put(13, 15){\small{$\alpha$}}
\put(49, 3){\small{$\beta$}}
\put(85, 14){\small{$\gamma$}}
\end{overpic}
\caption{The van~Kampen diagram $D$ for the word $u_kv^{-1}$.}
\label{two_corridors}
\end{figure}

Let $\alpha$, $\beta$, and $\gamma$ be the subwords such that  the word $v$ is the concatenation $\alpha c \beta a \gamma$.  Then $\alpha = t^{n-k}b^{\ell}$, $\beta = b^{-\ell}t^{-n}(tb)^{m}$ and
$\gamma = (tb)^{-m}t^k$ in $G$.  By \Cref{b_subgroup} these are equal in $G$ to
$b^{\ell}t^{n-k}$, $b^{m-\ell}t^{m-n}$ and $b^{-m}t^{k-m}$,
respectively.  Since these are geodesic representatives in
$\langle b, t \mid [b,t] \rangle$, and therefore  (by Lemma~\ref{b_subgroup}) in $G$, it follows that
\begin{align*}
  |u_k| \ge |v| &\ge 2 + (|\ell|+n-k) + (|m - \ell| + |m - n|) + (|m| + |k - m|)\\
                &= 2 + n - k + |n-m| + |m-\ell| + |\ell| + |k-m| + |m|\\
                &\ge 2 + n - k + (n-m) + (m-\ell) + \ell + (k-m) + m\\
                &= 2 + 2n,              
\end{align*}
so that $|u_k| = |v|$ and $u_k$ is geodesic.

The other case is where $a$ occurs before $c$ in $v$. The word  $u_k u''_k$ is a cyclic conjugate of $w_n$, and so $(u''_k)^{-1} = u_k$ in $G$.   So  $(u''_k)^{-1}   =  t^{-k} a  t^{n} c   t^{-(n-k)}$ is a length-$(2n+2)$ word which equals $v$ in $G$.  Comparing them using the same argument as the previous case establishes that    $|u_k| =2n+2$.  This completes our proof.

\bibliographystyle{alpha}
\bibliography{nima}

\end{document}